\newtheorem{theorem}{Theorem}
\theoremstyle{plain}
\newtheorem{corollary}{Corollary}
\newtheorem{definition}{Definition}
\newtheorem{lemma}{Lemma}
\numberwithin{equation}{section}
\begin{document}
\title{On new types of convex functions and their properties}
\author{M.Emin \"{o}zdemir }
\email{eminozdemir@uludag.edu.tr}
\curraddr{Bursa Ulu\i da\u{g} \"{U}niversity, Department of Mathematics\\
and Science Education , Bursa-T\"{u}rkiye}
\subjclass{Primary 26A33, 26A51; Secondary 05A15, 15A18}
\keywords{convexity, }
\thanks{This paper is in final form and no version of it will be submitted
for publication elsewhere.}

\begin{abstract}
In this study, Firstly, we will write two new convex functions for $%
-1<n-\alpha \leq 1\ $and two new lemmas. Then we will find the relevance of
the two new lemmas to Caputo-left-sided derivatives under additional
conditions and draw conclusions about them. Secondly,We will give examples
that support it. In what follows, we will rewrite the Hermite Hadamard
integral inequality based on this new definitions. Furthermore, we will
write an elementary inequality valid for value $-1<n-\alpha \leq 1.$
\end{abstract}

\maketitle

\section{Introduction}

Many important inequalities are established for the class of convex
functions, but one of the most important is so-called Hermite--Hadamard's
inequality (or Hadamard's inequality). This double inequality is stated as
follows in literature:

Let$\ f:I\subseteq R\rightarrow R$ be a convex function and let$\ a,b\in I$,
with $a<b$. The following double inequality;%
\begin{equation}
f\left( \frac{a+b}{2}\right) \leq \frac{1}{b-a}\int\nolimits_{a}^{b}f\left(
x\right) dx\leq \frac{f\left( a\right) +f\left( b\right) }{2}  \label{a0}
\end{equation}

is known as Hermite- Hadamard's inequality for convex functions. For many
papers connected with $\left( \ref{a0}\right) $ for different types of
convex functions see \cite{A2}-\cite{A37} the references therein. The paper
is organized as follows :

Firstly, \ we need some representations and preliminary information that
appear in the inequalities we will write.

Caputo left-sided derivative%
\begin{equation*}
^{C}D_{^{a^{+}}}^{\alpha }\left[ f\right] (x)=\frac{1}{\Gamma \left(
n-\alpha \right) }\int_{a}^{x}\left( x-\xi \right) ^{n-\alpha -1}\frac{%
d^{n}\left( f\left( \xi \right) \right) }{d\xi ^{n}}dt,\;\;x>a
\end{equation*}

Caputo right-sided derivative

\begin{equation}
^{C}D_{^{b-}}^{\alpha }\left[ f\right] (x)=\frac{\left( -1\right) ^{n}}{%
\Gamma \left( n-\alpha \right) }\int_{x}^{b}\left( \xi -x\right) ^{n-\alpha
-1}\frac{d^{n}\left( f\left( \xi \right) \right) }{d\xi ^{n}}dt,\;\;x<b
\label{a19}
\end{equation}

As can be seen,\ In Caputo, The first calculated the derivative of the
integer order and then the integral of the non-integer order.

Second, we need the new definition and some of its implications, which we
present below.

\begin{definition}
$\left( \left( n-\alpha \ convex\right) \right) $\bigskip The function $%
f:I\subseteq \left[ 1,\infty \right) \rightarrow \mathbb{R}^{+}$ is said to
be $\left( n-\alpha \right) \ convex$-convex, if the following inequality
holds:%
\begin{equation}
\ f(tx+(1-t)y)\leq t^{n-\alpha }f(x)+\left\vert \left( t^{n-\alpha }-\left(
1-t\right) ^{n-\alpha }\right) \right\vert f(y)  \label{a1}
\end{equation}
\end{definition}

for $-1<n-\alpha \leq 1,\left( 0<n\leq 1,\ 0\leq \alpha <1\right) \ \ t\in
\left( 0,1\right] \ ,\ x,y\in I.\ $with $y<x.\ $If $(\ref{a1})$ reverses,
then $f$ is said to be concave on $I.$Below are some interpretations of the
above definition.\bigskip

\bigskip $a)\ \ \ $Since\ $~\left\vert t_{1}^{n-\alpha }-t_{2}^{n-\alpha
}\right\vert \leq \left\vert t_{1}-t_{2}\right\vert ^{n-\alpha },$ for $%
n-\alpha \in \left( -1,1\right] \ $and $\forall t_{1},t_{2}\in \lbrack 0,1]\
\ $the definition in$\left( \ref{a1}\right) $ can be rewritten as 
\begin{eqnarray*}
&&f(tx+(1-t)y) \\
&\leq &t^{n-\alpha }f(x)+\left\vert \left( t^{n-\alpha }-\left( 1-t\right)
^{n-\alpha }\right) \right\vert f(y)\  \\
&\leq &t^{n-\alpha }f(x)+\left\vert 2t-1\right\vert ^{n-\alpha }f(y).
\end{eqnarray*}

$b)\ $Since $n-\alpha \in \left( -1,1\right] \ ,\ $\ $\ $If we take into
account the fact $\left\vert 2t-1\right\vert ^{n-\alpha }\leq t^{n-\alpha }\
,\ $for $t\in \left[ \frac{1}{2},1\right] \ $we can rewrite the inequality $%
\left( \ref{a1}\right) .$

We obtain a new definitionwritten for convex functions: 
\begin{eqnarray*}
f(tx+(1-t)y) &\leq &t^{n-\alpha }f(x)+\left\vert \left( t^{n-\alpha }-\left(
1-t\right) ^{n-\alpha }\right) \right\vert f\left( y\right) \\
&\leq &t^{n-\alpha }f(x)+\left\vert 1-2t\right\vert ^{n-\alpha }f(y) \\
&\leq &t^{n-\alpha }f(x)+t^{n-\alpha }f(y) \\
&=&t^{n-\alpha }\left( f\left( x\right) +f\left( y\right) \right)
\end{eqnarray*}

From the first and last terms 
\begin{equation}
f(tx+(1-t)y)\leq t^{n-\alpha }\left( f\left( x\right) +f\left( y\right)
\right)  \label{a12}
\end{equation}

$c)\ \ $If we choose $t=\frac{1}{2}$\ , $n-\alpha =1\ $in $\left( \ref{a12}%
\right) \ $for $t\in \left[ \frac{1}{2},1\right] .\ $we obtain 
\begin{equation*}
f\left( \frac{x+y}{2}\right) \leq \frac{f\left( x\right) +f\left( y\right) }{%
2}
\end{equation*}

$d)\ \ $If we just choose $t=\frac{1}{2}\ $in $\left( \ref{a12}\right) $%
\begin{equation*}
f\left( \frac{x+y}{2}\right) \leq \left( \frac{1}{2}\right) ^{n-\alpha }%
\left[ f\left( x\right) +f\left( y\right) \right]
\end{equation*}

$e)\ \ \ $If we choose $t=\frac{\frac{1}{2}+1}{2}$\ in $\left( \ref{a12}%
\right) \ $for $t\in \left[ \frac{1}{2},1\right] \ $we have 
\begin{equation*}
f\left( \frac{3x+y}{4}\right) \leq \left( \frac{3}{4}\right) ^{n-\alpha }%
\left[ f\left( x\right) +f\left( y\right) \right]
\end{equation*}

$f)\ \ $If we choose $t=\frac{\frac{3}{4}+1}{2}$\ in $\left( \ref{a12}%
\right) \ $for $t\in \left[ \frac{1}{2},1\right] \ $we have%
\begin{equation*}
f\left( \frac{7x+6y}{8}\right) \leq \left( \frac{7}{8}\right) ^{n-\alpha }%
\left[ f\left( x\right) +f\left( y\right) \right]
\end{equation*}

$g)\ \ $If we choose and $t=\frac{\frac{7}{8}+1}{2}$\ in $\left( \ref{a12}%
\right) \ $for $t\in \left[ \frac{1}{2},1\right] \ $we have%
\begin{equation*}
f\left( \frac{15x+14y}{8}\right) \leq \left( \frac{15}{16}\right) ^{n-\alpha
}\left[ f\left( x\right) +f\left( y\right) \right]
\end{equation*}

\bigskip If we continue in this sense, we obtain the sequence $\left( \frac{%
2^{k}-1}{2^{k}}\right) ^{n-\alpha }$ which converges to $1\ $for $\left(
n-\alpha ;k\right) \rightarrow \left( \pm 1,\ \infty \right) $

\begin{equation*}
\left( \frac{1}{2}\right) ^{n-\alpha },\ \left( \frac{3}{4}\right)
^{n-\alpha },...,\left( \frac{2^{k}-1}{2^{k}}\right) ^{n-\alpha
},...\rightarrow 1\ (k=1,2,3,...)
\end{equation*}

That is, 
\begin{equation}
f\left( \frac{\left( \frac{2^{k}-1}{2^{k}}\right) x+\frac{1}{2^{k}}y}{2^{k}}%
\right) \leq \left( \frac{2^{k}-1}{2^{k}}\right) ^{n-\alpha }\left[ f\left(
x\right) +f\left( y\right) \right]  \label{a17}
\end{equation}

and without losing generality we can write 
\begin{equation}
f\left( \frac{\left( \frac{2^{k}-1}{2^{k}}\right) x+\frac{1}{2^{k}}y}{2^{k}}%
\right) \leq \left( \frac{2^{k}-1}{2^{k}}\right) ^{n-\alpha }f\left(
x\right) +\left( \frac{\left( 2^{k}\right) ^{2}-1}{2^{k}}\right) ^{n-\alpha
}f\left( y\right)  \label{a18}
\end{equation}

for $k=1,2,3,...\ $and \ $n-\alpha \in \left( -1,1\right] .$

$h)\ $ if we only integrate both sides of the inequality$\left( \ref{a12}%
\right) $\ according to $t\ $: 
\begin{equation*}
\frac{1}{x-y}\int\nolimits_{x}^{y}f\left( u\right) du\leq \frac{f\left(
x\right) +f\left( y\right) }{n-\alpha +1};\ \ u=tx+(1-t)y
\end{equation*}

$\imath )\ \ $If we choose $t=1$\ in $\left( \ref{a12}\right) \ $for $t\in %
\left[ \frac{1}{2},1\right] \ $we obtain%
\begin{equation*}
f\left( y\right) \geq 0
\end{equation*}

Additionally, If we take limit of $\left( \ref{a17}\right) $ we\ again find
the $f\left( y\right) \geq 0$ relation for $\left( n-\alpha ;k\right)
\rightarrow \left( \pm 1,\ \infty \right) $.

$i)\;$If we choose $n=\frac{1}{2}\ ,\alpha =-\frac{1}{2}\ $and $t=\frac{1}{2}
$\ in$\left( \ref{a1}\right) .\;$we obtain Jensen convexity:%
\begin{eqnarray*}
f(\frac{x+y}{2}) &\leq &t^{n-\alpha }f(x)+\left\vert \left( t^{n-\alpha
}-\left( 1-t\right) ^{n-\alpha }\right) \right\vert f(y)\  \\
&=&\frac{1}{2}f\left( x\right) +\left\vert \left( \left( \frac{1}{2}\right)
-\left( 1-\frac{1}{2}\right) \right) \right\vert f(y) \\
&=&\frac{1}{2}f\left( x\right) +\left\vert 0\right\vert .f(y)\leq \frac{1}{2}%
f\left( x\right) +\frac{1}{2}f\left( y\right) =\frac{f\left( x\right)
+f\left( y\right) }{2}
\end{eqnarray*}

We used the fact that inequality $\left\vert t_{1}^{n-\alpha
}-t_{2}^{n-\alpha }\right\vert \leq \left\vert t_{1}-t_{2}\right\vert
^{n-\alpha },$ for $n-\alpha \in \left( -1,1\right] ,$ and $\forall
t_{1},t_{2}\in \lbrack 0,1].$

Now, in the light of the informations above, let's rearrange inequality$%
\left( \ref{a18}\right) \ $as definition.

\begin{definition}
The function $f:I\subseteq \left[ 1,\infty \right) \rightarrow \mathbb{R}%
^{+} $ is said to be $k^{n-\alpha }\ -convex$, if the following inequality
holds:%
\begin{equation*}
f\left( \frac{\left( \frac{2^{k}-1}{2^{k}}\right) x+\frac{1}{2^{k}}y}{2^{k}}%
\right) \leq \left( \frac{2^{k}-1}{2^{k}}\right) ^{n-\alpha }f\left(
x\right) +\left( \frac{\left( 2^{k}\right) ^{2}-1}{2^{k}}\right) ^{n-\alpha
}f\left( y\right)
\end{equation*}
\end{definition}

for $k=1,2,3,...\ $and \ $n-\alpha \in \left( -1,1\right] .$

Example : The function 
\begin{equation*}
f\left( x\right) =x
\end{equation*}

is an $k^{n-\alpha }\ -convex.$ Because 
\begin{eqnarray*}
f\left( \frac{\left( \frac{2^{k}-1}{2^{k}}\right) x+\frac{1}{2^{k}}y}{2^{k}}%
\right) &=&\frac{\left( \frac{2^{k}-1}{2^{k}}\right) x+\frac{1}{2^{k}}y}{%
2^{k}} \\
&\leq &\left( \frac{2^{k}-1}{2^{k}}\right) ^{n-\alpha }x+\left( \frac{\left(
2^{k}\right) ^{2}-1}{2^{k}}\right) ^{n-\alpha }y \\
\ \ \ \ \ \ \ \ &\leq &\left( \frac{2^{k}-1}{2^{k}}\right) ^{n-\alpha
}f\left( x\right) +\left( \frac{\left( 2^{k}\right) ^{2}-1}{2^{k}}\right)
^{n-\alpha }f\left( y\right)
\end{eqnarray*}

In light of the above information, we write new theorems and new lemmas and
interpreted their results.

\section{Hermite-Hadamard Inequality for $\left( n-\protect\alpha \right)$
convex function}

\begin{theorem}
\bigskip \bigskip Let $\ f:I\subseteq \left[ 1,\infty \right) \rightarrow
R^{+}$ be an $\left( n-\alpha \right) $\ convex function. If $a<b$ and \ $%
f\in L[a,b]$, with $n-\alpha \in \left( -1,1\right] $ then the following
Hermite--Hadamard type inequality hold:%
\begin{equation}
2^{n-\alpha -1}f\left( \frac{a+b}{2}\right) \leq \frac{1}{b-a}%
\int\nolimits_{a}^{b}f\left( x\right) dx\leq \frac{f\left( a\right) +f\left(
b\right) }{n-\alpha +1}  \label{a2}
\end{equation}
\end{theorem}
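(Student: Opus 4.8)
The plan is to prove the two halves of \eqref{a2} separately, in each case converting the integral over $[a,b]$ into an integral in a parameter $t\in[0,1]$ via the affine substitution and then inserting a pointwise estimate coming from the definition. Two hypotheses do all the work and I would keep track of exactly where each is used: the nonnegativity of $f$ (guaranteed by the codomain $\mathbb{R}^{+}$), which is what lets one pass from the raw definition \eqref{a1} to the cleaner consequences, and the condition $n-\alpha>-1$, which is precisely what makes the elementary $t$-integrals below convergent. As a reassurance on the constants, note that at $n-\alpha=1$ the factor $2^{\,n-\alpha-1}$ becomes $1$ and $n-\alpha+1$ becomes $2$, so \eqref{a2} degenerates to the classical inequality \eqref{a0}.

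For the right-hand inequality I would not use \eqref{a12} (whose derivation, through interpretation $(b)$, is only valid for $t\in[\tfrac12,1]$) but rather the estimate from interpretation $(a)$, which holds on the whole interval: taking $x=b$, $y=a$ in \eqref{a1} and using $\left|t^{n-\alpha}-(1-t)^{n-\alpha}\right|\le|2t-1|^{n-\alpha}$ gives
\[
f\bigl(tb+(1-t)a\bigr)\le t^{n-\alpha}f(b)+|2t-1|^{n-\alpha}f(a),\qquad t\in(0,1].
\]
Integrating in $t$ over $[0,1]$ and substituting $w=tb+(1-t)a$ turns the left side into $\frac{1}{b-a}\int_a^b f$, while on the right one computes $\int_0^1 t^{n-\alpha}\,dt=\frac{1}{n-\alpha+1}$ and, by symmetry about $t=\tfrac12$, also $\int_0^1|2t-1|^{n-\alpha}\,dt=\frac{1}{n-\alpha+1}$. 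This yields $\frac{1}{b-a}\int_a^b f\le\frac{f(a)+f(b)}{n-\alpha+1}$, which is the second inequality and coincides with interpretation $(h)$.

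For the left-hand inequality I would write $\frac{a+b}{2}=\frac{u+v}{2}$ with $u=ta+(1-t)b$ and $v=(1-t)a+tb$, both lying in $[a,b]$, and apply the midpoint consequence $(d)$ of the definition, $f\!\left(\frac{u+v}{2}\right)\le\left(\tfrac12\right)^{n-\alpha}\bigl(f(u)+f(v)\bigr)$, in the rearranged form
\[
2^{\,n-\alpha}f\!\left(\frac{a+b}{2}\right)\le f\bigl(ta+(1-t)b\bigr)+f\bigl((1-t)a+tb\bigr).
\]
Integrating over $t\in[0,1]$, each summand on the right integrates to $\frac{1}{b-a}\int_a^b f$ (the second after the reflection $t\mapsto1-t$), so the right side becomes $\frac{2}{b-a}\int_a^b f$; dividing by $2$ gives $2^{\,n-\alpha-1}f\!\left(\frac{a+b}{2}\right)\le\frac{1}{b-a}\int_a^b f$, the first inequality of \eqref{a2}.

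The step I expect to be the main obstacle is the bookkeeping around the \emph{range of validity} of the pointwise estimates. The inequality \eqref{a1} carries the ordering constraint $y<x$, and as the outer parameter $t$ crosses $\tfrac12$ the two points $u,v$ exchange which one is larger; since $(d)$ is symmetric in its arguments this causes no loss, but it must be checked. More delicate is that \eqref{a12} is only licensed for $t\in[\tfrac12,1]$, so the right-hand inequality cannot simply integrate \eqref{a12} over $[0,1]$ — one must fall back on interpretation $(a)$, valid on all of $[0,1]$, as above. Finally I would verify carefully that both $t^{n-\alpha}$ and $|2t-1|^{n-\alpha}$, which are singular at $t=0$ and $t=\tfrac12$ respectively when $n-\alpha<0$, remain integrable; this is exactly where the hypothesis $n-\alpha>-1$ enters, and it is the reason the admissible range is $(-1,1]$.
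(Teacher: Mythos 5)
Your proposal is correct and takes essentially the same route as the paper's own proof: the right-hand inequality by integrating the pointwise bound $f\bigl(ta+(1-t)b\bigr)\le t^{n-\alpha}f(a)+\left\vert 1-2t\right\vert^{n-\alpha}f(b)$ obtained from the definition via $\left\vert t_{1}^{n-\alpha}-t_{2}^{n-\alpha}\right\vert\le\left\vert t_{1}-t_{2}\right\vert^{n-\alpha}$, and the left-hand inequality by integrating the midpoint estimate $2^{n-\alpha}f\left(\frac{a+b}{2}\right)\le f\bigl(ta+(1-t)b\bigr)+f\bigl((1-t)a+tb\bigr)$ over $t\in[0,1]$. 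Your additional bookkeeping (respecting the ordering constraint $y<x$ by taking $x=b$, $y=a$, avoiding \eqref{a12} outside $t\in\left[\frac{1}{2},1\right]$, and checking integrability of $t^{n-\alpha}$ and $\left\vert 2t-1\right\vert^{n-\alpha}$ when $n-\alpha<0$) merely tightens details the paper passes over silently.
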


\begin{proof}
firstly , let be $ta+(1-t)b=x,\ for\ t\in \left[ 0,1\right] $%
\begin{eqnarray*}
\frac{1}{b-a}\int\nolimits_{a}^{b}f\left( x\right) dx
&=&\int\nolimits_{0}^{1}f\left( ta+(1-t)b\right) dt \\
&\leq &\int\nolimits_{0}^{1}\left( t^{n-\alpha }f\left( a\right) +\left\vert
\left( t^{n-\alpha }-\left( 1-t\right) ^{n-\alpha }\right) \right\vert
f\left( b\right) \right) dt \\
&\leq &\int\nolimits_{0}^{1}\left( t^{n-\alpha }f\left( a\right) +\left\vert
1-2t\right\vert ^{n-\alpha }f\left( b\right) \right) dt \\
&=&\int\nolimits_{0}^{1}t^{n-\alpha }f\left( a\right)
dt+\int\nolimits_{0}^{1}\left\vert 1-2t\right\vert ^{n-\alpha }f\left(
b\right) dt \\
&=&f\left( a\right) \int\nolimits_{0}^{1}t^{n-\alpha }dt+f\left( b\right)
\int\nolimits_{0}^{\frac{1}{2}}\left( 1-2t\right) ^{n-\alpha }dt \\
&&+f\left( b\right) \int\nolimits_{\frac{1}{2}}^{1}\left( 2t-1\right)
^{n-\alpha }dt \\
&=&\frac{f\left( a\right) +f\left( b\right) }{n-\alpha +1}
\end{eqnarray*}%
From the first and last terms,we have%
\begin{equation}
\frac{1}{b-a}\int\nolimits_{a}^{b}f\left( x\right) dx\leq \frac{f\left(
a\right) +f\left( b\right) }{n-\alpha +1}  \label{a13}
\end{equation}%
Here we again used the fact that inequality $\left\vert t_{1}^{n-\alpha
}-t_{2}^{n-\alpha }\right\vert \leq \left\vert t_{1}-t_{2}\right\vert
^{n-\alpha },$ for $n-\alpha \in \left( -1,1\right] $ and $\forall
t_{1},t_{2}\in \lbrack 0,1],$\bigskip secondly,%
\begin{eqnarray*}
f\left( \frac{a+b}{2}\right) &=&f\left( \frac{\left[ ta+\left( 1-t\right) b%
\right] +\left[ \left( 1-t\right) a+tb\right] }{2}\right) \\
&=&f\left( \frac{1}{2}\left[ ta+\left( 1-t\right) b\right] +\frac{1}{2}\left[
\left( 1-t\right) a+tb\right] \right) \\
&\leq &\frac{1}{2^{n-\alpha }}f\left( ta+\left( 1-t\right) b\right) \\
&&+\left\vert \left( \left( \frac{1}{2}\right) ^{^{n-\alpha }}-\left( 1-%
\frac{1}{2}\right) ^{n-\alpha }\right) \right\vert f\left( \left( 1-t\right)
a+tb\right) \\
&=&\frac{1}{2^{n-\alpha }}f\left( ta+\left( 1-t\right) b\right) +0.f\left(
\left( 1-t\right) a+tb\right) \\
&\leq &\frac{1}{2^{n-\alpha }}f\left( ta+\left( 1-t\right) b\right) +\frac{1%
}{2^{n-\alpha }}f\left( \left( 1-t\right) a+tb\right)
\end{eqnarray*}%
From the first and last terms,we have%
\begin{equation}
f\left( \frac{a+b}{2}\right) \leq \frac{1}{2^{n-\alpha }}f\left( ta+\left(
1-t\right) b\right) +\frac{1}{2^{n-\alpha }}f\left( \left( 1-t\right)
a+tb\right)  \label{a14}
\end{equation}%
Now, If we integrate both sides of $\left( \ref{a14}\right) $ over $\left[
0,1\right] .\ $We\ have%
\begin{equation}
2^{n-\alpha -1}f\left( \frac{a+b}{2}\right) \leq \frac{1}{b-a}%
\int\nolimits_{a}^{b}f\left( x\right) dx  \label{a15}
\end{equation}%
Finally.$\ $If we combine inequalities $\left( \ref{a13}\right) $ and $\ref%
{a15}$ we get obtain we obtain\ $\left( \ref{a2}\right) $.

Here we used fact that $0<\frac{1}{2^{n-\alpha }}.$
\end{proof}

\begin{corollary}
1. \ If we choose $n$ and $\alpha $ so that $n=\frac{1}{k}\left(
k=1,2,3,4,...\right) ,\ \alpha =\frac{s-1}{s}\left( k,s=1,2,3,...\right) \;.$%
in $\left( \ref{a2}\right) \;$Then we have 
\begin{equation}
2^{\frac{1}{k}+\frac{1}{s}-2}f\left( \frac{a+b}{2}\right) \leq \frac{1}{b-a}%
\int\nolimits_{a}^{b}f\left( x\right) dx\leq \frac{f\left( a\right) +f\left(
b\right) }{\frac{1}{k}+\frac{1}{s}}  \label{a10}
\end{equation}
\end{corollary}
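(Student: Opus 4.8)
The plan is to derive this corollary as a direct specialization of the Hermite--Hadamard inequality $\left( \ref{a2}\right) $ established in the preceding Theorem. Because $\left( \ref{a2}\right) $ is valid for every admissible pair $\left( n,\alpha \right) $ with $n-\alpha \in \left( -1,1\right] $, the whole task reduces to inserting the prescribed values $n=\frac{1}{k}$ and $\alpha =\frac{s-1}{s}$ and simplifying the two exponents that occur.

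First I would record the key algebraic identity
\begin{equation*}
n-\alpha =\frac{1}{k}-\frac{s-1}{s}=\frac{1}{k}+\frac{1}{s}-1,
\end{equation*}
from which the left-hand exponent becomes $n-\alpha -1=\frac{1}{k}+\frac{1}{s}-2$ and the right-hand denominator becomes $n-\alpha +1=\frac{1}{k}+\frac{1}{s}$. Substituting these three expressions into $\left( \ref{a2}\right) $ then yields $\left( \ref{a10}\right) $ at once, with no further computation required.

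The one step that genuinely needs attention --- and which I regard as the main (if modest) obstacle --- is verifying that the chosen parameters actually satisfy the hypothesis $n-\alpha \in \left( -1,1\right] $ imposed by the Theorem, so that the substitution is legitimate rather than merely formal. For $k=1,2,3,\dots $ one has $0<\frac{1}{k}\leq 1$, matching the constraint $0<n\leq 1$; for $s=1,2,3,\dots $ one has $0\leq \frac{s-1}{s}<1$, matching $0\leq \alpha <1$. Consequently $n-\alpha =\frac{1}{k}+\frac{1}{s}-1$ attains its maximum value $1$ at $k=s=1$ and is bounded below by $-1$, a value it approaches but never reaches as $k,s\rightarrow \infty $. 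Hence $n-\alpha \in \left( -1,1\right] $ for every admissible $k,s$, the Theorem applies to $f$ for these parameters, and $\left( \ref{a10}\right) $ follows.
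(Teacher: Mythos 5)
Your proposal is correct and matches the paper's (implicit) argument exactly: the corollary is obtained by direct substitution of $n=\frac{1}{k}$, $\alpha=\frac{s-1}{s}$ into $\left( \ref{a2}\right)$, using $n-\alpha=\frac{1}{k}+\frac{1}{s}-1$ to rewrite the exponent and the denominator. Your additional verification that $n-\alpha\in\left( -1,1\right]$ for all admissible $k,s$ is a sound touch that the paper leaves unstated.
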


\ \ \ If we choose $k=1\ $and $s=1\ $and in\ $\left( \ref{a10}\right) \ $we
obtain directly well known Hermit- Hadamard inequalities in $\left( \ref{a0}%
\right) :$%
\begin{equation*}
f\left( \frac{a+b}{2}\right) \leq \frac{1}{b-a}\int\nolimits_{a}^{b}f\left(
x\right) dx\leq \frac{f\left( a\right) +f\left( b\right) }{2}
\end{equation*}

or If we take limit of the inequality $\left( \ref{a10}\right) $ for $\left(
n,\alpha \right) \rightarrow \left( 1,0\right) \ $we again obtain in $\left( %
\ref{a0}\right) .$

\ \ If we choose $n$ and $\alpha $ so that $n=\frac{1}{k}\ \left(
k=1,2,3,...\right) \;.$and $\ \alpha =\frac{1}{s}\left( s=2,3,...\right) $in 
$\left( \ref{a2}\right) \;$Then we have%
\begin{equation*}
2^{\frac{1}{k}-\frac{1}{s}-1}f\left( \frac{a+b}{2}\right) \leq \frac{1}{b-a}%
\int\nolimits_{a}^{b}f\left( x\right) dx\leq \frac{f\left( a\right) +f\left(
b\right) }{\frac{1}{k}-\frac{1}{s}+1}
\end{equation*}

On the other hand, Since

$\left( \frac{1}{k}-\frac{1}{s}-1,\frac{1}{k}-\frac{1}{s}+1\right) >\left(
-1,1\right) \ $for $\left( k,s\right) \rightarrow \left( \infty ,\infty
\right) \ $we obtain 
\begin{equation*}
f\left( \frac{a+b}{2}\right) \leq \frac{2}{b-a}\int\nolimits_{a}^{b}f\left(
x\right) dx\leq 2\left( f\left( a\right) +f\left( b\right) \right) 
\end{equation*}

\bigskip Suppose that the function $f:I\subseteq R\rightarrow \mathbb{R}^{+}$
is $\left( n-\alpha \right) \ $-convex. By applying the $\left( \ref{a2}\
\right) \ $on each of the intervals $\left[ a,\frac{a+b}{2}\right] \ $and $%
\left[ \frac{a+b}{2},b\right] \ $we get 
\begin{eqnarray*}
2^{n-\alpha -1}f\left( \frac{a+b}{2}\right)  &\leq &2^{n-\alpha -2}\left(
f\left( \frac{3a+b}{4}\right) +f\left( \frac{3b+a}{4}\right) \right)  \\
&\leq &\frac{1}{b-a}\int\nolimits_{a}^{b}f\left( x\right) dx \\
&\leq &\frac{1}{n-\alpha +1}\left\{ \frac{f\left( a\right) +f\left( b\right) 
}{2}+f\left( \frac{a+b}{2}\right) \right\}  \\
&\leq &\frac{1}{n-\alpha +1}\left\{ \frac{f\left( a\right) +f\left( b\right) 
}{2}\right\} 
\end{eqnarray*}%
If the division process is continued, the difference between the image of
the arithmetic mean of $\left[ a,b\right] $ and the arithmetic mean of $%
\left[ f\left( a\right) ,f\left( b\right) \right] $ will become smaller.$\ $

From first and last inequalities we obtain a Jensen type inequality for the $%
n-\alpha \ $convex functions : 
\begin{equation*}
2^{n-\alpha -1}f\left( \frac{a+b}{2}\right) \leq \frac{1}{n-\alpha +1}%
\left\{ \frac{f\left( a\right) +f\left( b\right) }{2}\right\}
\end{equation*}

\begin{theorem}
let be $f:I\subseteq \left[ 1,\infty \right) \rightarrow \mathbb{R}^{+}$ a $%
\ \left( n-\alpha \right) $ convex \ function with $n-\alpha \in \left( -1,1%
\right] ,\ t\in \left( 0,1\right] \ ,\ x,y\in I.\ $Then the following
inequality holds :
\end{theorem}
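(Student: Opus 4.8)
The plan is to derive the stated inequality directly from the defining relation $(\ref{a1})$ of $(n-\alpha)$-convexity, by evaluating $f$ at the two mutually reflected convex combinations $tx+(1-t)y$ and $(1-t)x+ty$ and then controlling the resulting weight $\left\vert t^{n-\alpha}-(1-t)^{n-\alpha}\right\vert$ by the elementary estimates already collected in remarks $(a)$ and $(b)$. Since the hypotheses here are exactly those of the definition (no integrability assumption is imposed and $x,y\in I$ are arbitrary with $y<x$), I expect the conclusion to be a pointwise algebraic inequality rather than an integral one, so the argument should reduce to adding, or comparing, instances of $(\ref{a1})$ in the manner of the "first-and-last-terms" bookkeeping used in the preceding Hermite--Hadamard theorem.

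First I would write down the primary bound $f(tx+(1-t)y)\leq t^{n-\alpha}f(x)+\left\vert t^{n-\alpha}-(1-t)^{n-\alpha}\right\vert f(y)$ straight from $(\ref{a1})$. Replacing $t$ by $1-t$ (legitimate since $1-t$ again lies in the admissible range) produces the companion bound $f((1-t)x+ty)\leq (1-t)^{n-\alpha}f(x)+\left\vert t^{n-\alpha}-(1-t)^{n-\alpha}\right\vert f(y)$, where I have used that the difference of powers is unchanged under $t\mapsto 1-t$ up to sign. Adding these symmetrizes the coefficient of $f(x)$ to $t^{n-\alpha}+(1-t)^{n-\alpha}$. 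To dispose of the remaining modulus I would invoke the key inequality $\left\vert t_{1}^{n-\alpha}-t_{2}^{n-\alpha}\right\vert\leq\left\vert t_{1}-t_{2}\right\vert^{n-\alpha}$ used throughout the remarks, which with $t_{1}=t$, $t_{2}=1-t$ reduces $\left\vert t^{n-\alpha}-(1-t)^{n-\alpha}\right\vert$ to $\left\vert 2t-1\right\vert^{n-\alpha}$; on $\left[\tfrac12,1\right]$ a further application of remark $(b)$ would then collapse the right-hand side into the desired closed form, and the claim follows from the first and last terms.

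The main obstacle I anticipate lies in the regime where the exponent $n-\alpha$ is negative, which the hypothesis permits down to just above $-1$. There $t\mapsto t^{n-\alpha}$ is decreasing and singular at $t=0$, so the convenient simplification $\left\vert 2t-1\right\vert^{n-\alpha}\leq t^{n-\alpha}$ of remark $(b)$ — which rests on $2t-1\leq t$ for $t\in\left[\tfrac12,1\right]$ together with the \emph{increasing} behaviour of the power map — can in fact reverse, since a decreasing power turns $2t-1\leq t$ into $(2t-1)^{n-\alpha}\geq t^{n-\alpha}$. Hence the final collapse of the right-hand side cannot be applied verbatim when $n-\alpha<0$; the case split $t\in\left(0,\tfrac12\right]$ versus $t\in\left[\tfrac12,1\right]$ must be carried out with the sign of $n-\alpha$ taken into account, and one must confirm that the sub-additive estimate $\left\vert t_{1}^{n-\alpha}-t_{2}^{n-\alpha}\right\vert\leq\left\vert t_{1}-t_{2}\right\vert^{n-\alpha}$ really holds across the whole range $n-\alpha\in(-1,1]$ before relying on it. Once these monotonicity points are settled, the remainder is routine substitution and addition in the style of the preceding theorem.
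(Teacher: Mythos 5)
There is a genuine gap: you have misidentified what the theorem actually asserts, and as a consequence your plan omits the step that generates the right-hand side. The inequality attached to this theorem is \eqref{a4}, namely
\begin{equation*}
2^{n-\alpha -1}f\left( \frac{a+b}{2}\right) \leq \frac{f\left( a\right) }{n-\alpha +2}+\frac{f\left( b\right) }{2\left( n-\alpha +1\right) }\left( 1-\left( -1\right) ^{n-\alpha +1}\right),
\end{equation*}
and the denominators $n-\alpha+2$ and $2(n-\alpha+1)$ cannot arise from any purely pointwise addition of instances of \eqref{a1}: they are the values of integrals such as $\int_{0}^{1}t\cdot t^{n-\alpha}\,dt$ and $\int_{0}^{\frac{1}{2}}t\left(1-2t\right)^{n-\alpha}dt$. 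The paper's proof proceeds in two stages. First it takes $t=\frac{1}{2}$ in \eqref{a1} (where the modulus coefficient $\left\vert (\frac12)^{n-\alpha}-(\frac12)^{n-\alpha}\right\vert$ is exactly zero, so no reflection trick is needed) and substitutes $x=ta+(1-t)b$, $y=(1-t)a+tb$ to obtain the doubling inequality \eqref{a5}, $2^{n-\alpha}f\left(\frac{a+b}{2}\right)\leq f\left(ta+(1-t)b\right)+f\left((1-t)a+tb\right)$. Your symmetrized addition of \eqref{a1} at $t$ and $1-t$ can at best reproduce something of this intermediate type, with the worse coefficient $t^{n-\alpha}+(1-t)^{n-\alpha}$ on $f(x)$; it is only the starting point, not the theorem. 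The decisive second stage, absent from your proposal, is to multiply \eqref{a5} by the weight $t$, integrate over $[0,1]$, apply the $(n-\alpha)$-convexity bound $t^{n-\alpha}f(a)+\left\vert 2t-1\right\vert^{n-\alpha}f(b)$ inside each integrand, and evaluate the four resulting Beta-type integrals split at $t=\frac12$; the factor $2^{n-\alpha-1}$ on the left is exactly $2^{n-\alpha}\int_{0}^{1}t\,dt$. Without this integration against $t$ your "first-and-last-terms bookkeeping" has no mechanism to produce the stated constants.

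Your caveat about the regime $n-\alpha<0$ is well taken, and in fact it cuts against the paper's own argument as much as yours: the estimate $\left\vert t_{1}^{n-\alpha}-t_{2}^{n-\alpha}\right\vert\leq\left\vert t_{1}-t_{2}\right\vert^{n-\alpha}$ is the standard subadditivity of $s\mapsto s^{r}$ only for $r\in(0,1]$, and for negative exponents both it and the monotonicity step $\left\vert 2t-1\right\vert^{n-\alpha}\leq t^{n-\alpha}$ on $\left[\frac12,1\right]$ can fail or reverse, while the integrals $\int_{0}^{\frac12}t(1-2t)^{n-\alpha}dt$ at least remain convergent for $n-\alpha>-1$. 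So that paragraph of yours is a legitimate criticism of the framework, but it does not repair the structural omission: you planned a pointwise proof of what is, by construction, a weighted integral inequality.
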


\begin{eqnarray}
&&2^{n-\alpha -1}f\left( \frac{a+b}{2}\right)   \label{a4} \\
&\leq &\frac{f\left( a\right) }{^{n-\alpha +2}}+\frac{f\left( b\right) }{%
2\left( n-\alpha +1\right) }\left( 1-\left( i\right) ^{2\left( n-\alpha
+1\right) }\right) 
\end{eqnarray}

Here $i^{2}=-1\left( i\in C\ \text{is complex number}\right) $

\begin{proof}
If $\ t=1/2$ is taken in inequality $\left( \ref{a1}\right) ,$

\begin{eqnarray*}
f\left( \frac{a+b}{2}\right) &\leq &\frac{1}{2^{n-\alpha }}f\left( x\right)
+\left\vert \left( \left( \frac{1}{2}\right) ^{^{n-\alpha }}-\left( 1-\frac{1%
}{2}\right) ^{n-\alpha }\right) \right\vert f\left( y\right) \\
&=&\frac{1}{2^{n-\alpha }}f\left( x\right) +\left\vert \left( \left( \frac{1%
}{2}\right) ^{^{n-\alpha }}-\left( 1-\frac{1}{2}\right) ^{n-\alpha }\right)
\right\vert f\left( y\right) \\
&=&\frac{1}{2^{n-\alpha }}f\left( x\right) +\left\vert 0\right\vert f\left(
y\right) \\
&\leq &\frac{1}{2^{n-\alpha }}f\left( x\right) +\frac{1}{2^{n-\alpha }}%
f\left( y\right)
\end{eqnarray*}

From the first and last terms,we have%
\begin{equation*}
2^{n-\alpha }f\left( \frac{a+b}{2}\right) \leq f\left( x\right) +f\left(
y\right)
\end{equation*}
\end{proof}

Now, for all$\ n-\alpha \in \left( -1,1\right] \ $,$\;\frac{1}{2^{n-\alpha }}%
>0,\ $Now let $x=ta+\left( 1-t\right) b,\ y=\left( 1-t\right) a+tb.\ $Then

\begin{equation}
2^{n-\alpha }f\left( \frac{a+b}{2}\right) \leq f\left( ta+\left( 1-t\right)
b\right) +f\left( \left( 1-t\right) a+tb\right)  \label{a5}
\end{equation}

Multiplying both sides of $\left( \ref{a5}\right) $ by \ $t\ $and taking
integral over $\left[ 0,1\right] \ $with respect to $t\ $with the convexity
of $f$ : 
\begin{eqnarray*}
2^{n-\alpha -1}f\left( \frac{a+b}{2}\right) &\leq &\int\nolimits_{0}^{1}t\
f\left( ta+\left( 1-t\right) b\right) dt+\int\nolimits_{0}^{1}t\ f\left(
\left( 1-t\right) a+tb\right) dt \\
&=&\int\nolimits_{0}^{1}\left\{ 
\begin{array}{c}
t^{n-\alpha +1}f(a)+t\ \left\vert \left( t^{n-\alpha }-\left( 1-t\right)
^{n-\alpha }\right) \right\vert f\left( b\right) dt \\ 
+t\ \left\vert \left( \left( 1-t\right) ^{n-\alpha }-t^{n-\alpha }\right)
\right\vert f\left( b\right) dt%
\end{array}%
\right\} \\
&\leq &\int\nolimits_{0}^{1}\left\{ t^{n-\alpha +1}f(a)+t\ \left\vert
2t-1\right\vert ^{n-\alpha }f\left( b\right) dt\right\}
+\int\nolimits_{0}^{1}t\ \left\vert 1-2t\right\vert ^{n-\alpha }f\left(
b\right) dt \\
&=&\frac{f\left( a\right) }{^{n-\alpha +2}}+f\left( b\right) \left\{ 
\begin{array}{c}
\int\nolimits_{0}^{\frac{1}{2}}t\left( 2t-1\right) ^{n-\alpha
}dt+\int\nolimits_{\frac{1}{2}}^{1}t\left( 1-2t\right) ^{n-\alpha }dt \\ 
+\int\nolimits_{0}^{\frac{1}{2}}t\left( 1-2t\right) ^{n-\alpha
}dt+\int\nolimits_{\frac{1}{2}}^{1}t\left( 2t-1\right) ^{n-\alpha }dt%
\end{array}%
\right\} \\
&=&\frac{f\left( a\right) }{^{n-\alpha +2}}+f\left( b\right) \left\{ \frac{1%
}{2\left( n-\alpha +1\right) }-\frac{\left( -1\right) ^{n-\alpha +1}}{%
2\left( n-\alpha +1\right) }\right\} \\
&=&\frac{f\left( a\right) }{^{n-\alpha +2}}+\frac{f\left( b\right) }{2\left(
n-\alpha +1\right) }\left( 1-\left( -1\right) ^{n-\alpha +1}\right)
\end{eqnarray*}

which completes the proof.

Here we used integrals

\begin{eqnarray*}
\int\nolimits_{0}^{\frac{1}{2}}t\left( 2t-1\right) ^{n-\alpha }dt &=&-\frac{1%
}{4}\left( \frac{\left( -1\right) ^{n-\alpha +2}}{n-\alpha +2}+\frac{\left(
-1\right) ^{n-\alpha +1}}{n-\alpha +1}\right) \\
,\int\nolimits_{\frac{1}{2}}^{1}t\left( 1-2t\right) ^{n-\alpha }dt &=&-\frac{%
1}{4}\left( \frac{\left( -1\right) ^{n-\alpha +1}}{n-\alpha +1}-\frac{\left(
-1\right) ^{n-\alpha +2}}{n-\alpha +2}\right) \ 
\end{eqnarray*}

\begin{eqnarray*}
\int\nolimits_{0}^{\frac{1}{2}}t\left( 1-2t\right) ^{n-\alpha }dt &=&\frac{1%
}{4}\left( \frac{1}{n-\alpha ++1}-\frac{1}{n-\alpha +2}\right) \\
,\ \int\nolimits_{\frac{1}{2}}^{1}t\left( 2t-1\right) ^{n-\alpha }dt &=&%
\frac{1}{4}\left( \frac{1}{n-\alpha +2}+\frac{1}{n-\alpha +1}\right)
\end{eqnarray*}

\bigskip

$i)\ $\ If\ we\ take \ limit \ for$\ \left( n,\alpha \right) \rightarrow
\left( 1^{-},1^{-}\right) \ $or $\left( n,\alpha \right) \rightarrow \left(
0^{+},0^{+}\right) $in $\left( \ref{a4}\right) \ $we have 
\begin{equation*}
f\left( \frac{a+b}{2}\right) \leq f\left( a\right) +\frac{f\left( b\right) }{%
2}
\end{equation*}

$ii)\ \ $If\ we\ choose\ \ $n-\alpha =\frac{1}{2}\ $in$\left( \ref{a4}%
\right) \ $we have%
\begin{equation*}
f\left( \frac{a+b}{2}\right) \leq 2\sqrt{2}\left( \frac{f\left( a\right) }{%
^{5}}+\frac{f\left( b\right) }{3}\left( 1+e^{-\frac{\pi }{2}i}\right) \right)
\end{equation*}

where $i\in C\ $is a complex number,\ 

Now, we will write two new lemmas and new results based on them for $\left(
n-\alpha \right) \ $convex functions.

\begin{lemma}
Let $f:I\subseteq \left[ 1,\infty \right) \rightarrow R^{+}\ $be a
differentiable mapping on $(y,x)$ with $y<x$, $f^{(n)}>0.$ If $\ f^{(n)}\in
L[y,x]$, for $0<n\leq 1,\ 0\leq \alpha <1,\ t\in \left[ 0,1\right] $ . The
following equality for Caputo right-sided derivative holds:%
\begin{eqnarray*}
&&\left[ \frac{\left( -1\right) ^{n}}{\left( x-y\right) ^{2}}\Gamma \left(
2\right) -\frac{\left( -1\right) ^{n}\ }{\left( x-y\right) ^{\left( n-\alpha
\right) }}\Gamma \left( n-\alpha +2\right) \right] \left(
C_{D_{x^{-}}}^{\alpha }f\right) \left( u\right)  \\
&=&\int\nolimits_{0}^{1}t\left( 1-t^{n-\alpha }\right) f^{\left( n\right)
}\left( tx+\left( 1-t\right) y\right) dt
\end{eqnarray*}
\end{lemma}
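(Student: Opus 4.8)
The plan is to work backwards from the right-hand side. First I would use linearity to split the kernel as $t(1-t^{n-\alpha})=t-t^{\,n-\alpha+1}$, writing the target integral as $J_{1}-J_{2}$ with $J_{1}=\int_{0}^{1}t\,f^{(n)}(tx+(1-t)y)\,dt$ and $J_{2}=\int_{0}^{1}t^{\,n-\alpha+1}f^{(n)}(tx+(1-t)y)\,dt$. The point of this split is that each piece is a single power of $t$ tested against $f^{(n)}$, which is precisely the shape that a change of variables turns into a weighted integral of $f^{(n)}$ over $[y,x]$, i.e.\ into the integrals that build the Caputo right-sided derivative in $(\ref{a19})$.

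Next I would perform the affine change of variables $u=tx+(1-t)y$, so that $t=(u-y)/(x-y)$, $dt=du/(x-y)$, and the limits run from $u=y$ to $u=x$. This converts $J_{1}$ into $(x-y)^{-2}\int_{y}^{x}(u-y)\,f^{(n)}(u)\,du$ and $J_{2}$ into $(x-y)^{-(n-\alpha+2)}\int_{y}^{x}(u-y)^{\,n-\alpha+1}f^{(n)}(u)\,du$. The hypotheses $f^{(n)}\in L[y,x]$ and $f^{(n)}>0$ guarantee that both integrals are finite and that these manipulations are legitimate. The heart of the argument is then to recognise each weighted integral as a multiple of $\left(C_{D_{x^{-}}}^{\alpha}f\right)(u)$ evaluated at the left endpoint $u=y$, whose defining kernel carries the power $n-\alpha-1$. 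I would pull the constants through using $\Gamma(2)=1$ and $\Gamma(n-\alpha+2)=(n-\alpha+1)(n-\alpha)\,\Gamma(n-\alpha)$, so that the $1/\Gamma(n-\alpha)$ normalisation and the sign $(-1)^{n}$ in $(\ref{a19})$ reassemble into the bracketed coefficient $\frac{(-1)^{n}}{(x-y)^{2}}\Gamma(2)-\frac{(-1)^{n}}{(x-y)^{n-\alpha}}\Gamma(n-\alpha+2)$ multiplying a single factor $\left(C_{D_{x^{-}}}^{\alpha}f\right)(u)$.

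The step I expect to be the main obstacle is reconciling the exponent of $(u-y)$ produced by the substitution with the exponent $n-\alpha-1$ demanded by the Caputo kernel: the raw change of variables delivers the weights $(u-y)^{1}$ and $(u-y)^{\,n-\alpha+1}$, which differ from the kernel power by two. Bridging this gap is where the real work lies. I would attempt it either by an integration by parts that transfers derivatives between $f^{(n)}$ and the power weight, carefully tracking the boundary contributions at $u=y$ and $u=x$ (this is where the positivity $f^{(n)}>0$ and the endpoint behaviour enter), or by first pinning down the exact normalisation convention under which the author reads $\left(C_{D_{x^{-}}}^{\alpha}f\right)(u)$. Once both weighted integrals are expressed through the same Caputo-derivative factor, the remaining task is purely bookkeeping: collecting the powers of $(x-y)$ and the Gamma constants to obtain the bracketed coefficient and hence the claimed identity.
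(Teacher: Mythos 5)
Your reduction is exactly the paper's: the same split $t\left(1-t^{n-\alpha}\right)=t-t^{\,n-\alpha+1}$, the same substitution $u=tx+(1-t)y$, and the same two weighted integrals $\frac{1}{(x-y)^{2}}\int_{y}^{x}(u-y)f^{(n)}(u)\,du$ and $\frac{1}{(x-y)^{n-\alpha+2}}\int_{y}^{x}(u-y)^{n-\alpha+1}f^{(n)}(u)\,du$. The divergence is in the final step. The paper simply relabels these two integrals as $(-1)^{n}\Gamma(2)\left(C_{D_{x^{-}}}^{\alpha}f\right)(u)$ and $(-1)^{n}\Gamma(n-\alpha+2)\left(C_{D_{x^{-}}}^{\alpha}f\right)(u)$ and factors out the bracket; you instead flagged this identification as the main obstacle and left it open. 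Your instinct is right, and this is where your proposal has a genuine gap: the weights $(u-y)^{1}$ and $(u-y)^{n-\alpha+1}$ delivered by the substitution do not match the kernel power $n-\alpha-1$ in the definition $(\ref{a19})$, and worse, the two weights would correspond to operators of two \emph{different} orders, so writing both as one common factor $\left(C_{D_{x^{-}}}^{\alpha}f\right)(u)$ is not licensed by $(\ref{a19})$. At this point the paper performs an act of notation rather than a derivation, and your proof is incomplete exactly where the paper's is unjustified.

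Neither of your two proposed bridges can close this gap. Integration by parts trades exactly one power of the weight for one order of differentiation of $f$: lowering $(u-y)^{n-\alpha+1}$ to $(u-y)^{n-\alpha-1}$ costs two antiderivatives of $f^{(n)}$, so the resulting integrand involves lower-order derivatives of $f$ (plus boundary terms), never $f^{(n)}$ against the Caputo kernel; no bookkeeping of the boundary contributions at $u=y$ and $u=x$ repairs this, and the hypothesis $f^{(n)}>0$ plays no role (the paper's proof does not use it either). As for ``pinning down the normalisation,'' the only convention that makes the displayed identity true is the paper's implicit one, in which $\left(C_{D_{x^{-}}}^{\alpha}f\right)(u)$ is a placeholder for whichever weighted integral of $f^{(n)}$ arises: up to the sign $(-1)^{n}$ and the Gamma constants, the integrals $\int_{y}^{x}(u-y)f^{(n)}(u)\,du$ and $\int_{y}^{x}(u-y)^{n-\alpha+1}f^{(n)}(u)\,du$ are Riemann--Liouville-type fractional integrals of $f^{(n)}$ of orders $2$ and $n-\alpha+2$ respectively, not a single Caputo derivative of order $\alpha$. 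So your substitution and bookkeeping faithfully reproduce the paper's argument, but the remaining step is not completable as stated; an honest version of the lemma would restate its left-hand side in terms of those two distinct fractional integrals rather than one symbol $\left(C_{D_{x^{-}}}^{\alpha}f\right)(u)$.
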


Here $\left( C_{D_{x^{-}}}^{\alpha }f\right) \left( u\right) \ $is as in$%
\left( \ref{a19}\right) \ $and $\ \Gamma \left( r\right)
=\int\limits_{0}^{\infty }e^{-t}t^{r-1}dt,\ \left( r>0\right) \ $is the
Gamma function .

\begin{proof}
Since $\ -1<n-\alpha \leq 1,\ t\in \left[ 0,1\right] \ or\ n-\alpha \in
\left( 0,1\right] \ $\ with $y<x\ ,$ $f^{(n)}>0\ $we have Firstly , by
integrating and setting $u=tx+\left( 1-t\right) y$ 
\begin{eqnarray*}
&&\int\nolimits_{0}^{1}t\left( 1-t^{n-\alpha }\right) f^{\left( n\right)
}\left( tx+\left( 1-t\right) y\right) dt \\
&=&\int\nolimits_{0}^{1}\left( t-t^{n-\alpha +1}\right) f^{\left( n\right)
}\left( tx+\left( 1-t\right) y\right) dt\ \ \ \ ,\ \ \ \ \left( u=tx+\left(
1-t\right) y\right)  \\
&=&\frac{1}{x-y}\int\nolimits_{y}^{x}\left( \frac{u-y}{x-y}-\left( \frac{u-y%
}{x-y}\right) ^{n-\alpha +1}\right) f^{\left( n\right) }\left( u\right) du \\
&=&\frac{1}{\left( x-y\right) ^{2}}\int\nolimits_{y}^{x}\left( u-y\right)
f^{\left( n\right) }\left( u\right) du \\
&&-\frac{1}{\left( x-y\right) ^{n-\alpha +2}}\int\nolimits_{y}^{x}\left(
u-y\right) ^{n-\alpha +1}f^{\left( n\right) }\left( u\right) du \\
&=&\frac{\left( -1\right) ^{n}}{\left( x-y\right) ^{2}}\Gamma \left(
2\right) \left( C_{D_{x^{-}}^{\alpha }}f\right) \left( u\right)  \\
&&-\frac{\left( -1\right) ^{n}}{\left( x-y\right) ^{n-\alpha +2}}\Gamma
\left( n-\alpha +2\right) \left( C_{D_{x^{-}}^{\alpha }}f\right) \left(
u\right)  \\
&=&\frac{\left( -1\right) ^{n}}{\left( x-y\right) ^{2}}\left[ \Gamma \left(
2\right) -\frac{1}{\left( x-y\right) ^{n-\alpha }}\Gamma \left( n-\alpha
+2\right) \right] \left( C_{D_{x^{-}}^{\alpha }}f\right) \left( u\right) 
\end{eqnarray*}
\end{proof}

\begin{lemma}
Let $f:I\subseteq \left[ 1,\infty \right) \rightarrow R^{+}\ $be a
differentiable mapping on $(y,x)$with $y<x$. If $\ f^{(n)}\in L[y,x]$, for $%
0<n\leq 1,\ 0\leq \alpha <1,\ t\in \left[ 0,1\right] $ The following
equality for Caputo right-sided derivative holds:%
\begin{eqnarray}
&&\frac{\left( -1\right) ^{n}}{\left( x-y\right) ^{n-\alpha +1}}\left[
\Gamma \left( n-\alpha +1\right) -\frac{1}{x-y}\Gamma \left( n-\alpha
+1\right) \right] \left( C_{D_{x^{-}}^{\alpha }}f\right) \left( u\right)
\label{a7} \\
&=&\int\nolimits_{0}^{1}t^{n-\alpha }\left( 1-t\right) f^{\left( n\right)
}\left( tx+\left( 1-t\right) y\right) dt
\end{eqnarray}
\end{lemma}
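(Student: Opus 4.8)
The plan is to run the same substitution argument that established the preceding lemma, only with the kernel $t^{n-\alpha}(1-t)$ in place of $t(1-t^{n-\alpha})$. I would begin from the right-hand side and expand the factor as $t^{n-\alpha}(1-t)=t^{n-\alpha}-t^{n-\alpha+1}$, so that the single integral splits as
\[
\int_{0}^{1}t^{n-\alpha}(1-t)f^{(n)}(tx+(1-t)y)\,dt=\int_{0}^{1}t^{n-\alpha}f^{(n)}(tx+(1-t)y)\,dt-\int_{0}^{1}t^{n-\alpha+1}f^{(n)}(tx+(1-t)y)\,dt .
\]
The range $-1<n-\alpha\le 1$ together with $f^{(n)}\in L[y,x]$ and $f^{(n)}>0$ is used only to guarantee that both integrals are finite and that the change of variables below is legitimate; it requires no separate argument.

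Next I would perform the substitution $u=tx+(1-t)y$, giving $t=\dfrac{u-y}{x-y}$, $dt=\dfrac{du}{x-y}$, and sending the limits $t=0,1$ to $u=y,x$. Each term then becomes a weighted integral of $f^{(n)}$ against a power of $(u-y)$ divided by the corresponding power of $(x-y)$, namely
\[
\frac{1}{(x-y)^{n-\alpha+1}}\int_{y}^{x}(u-y)^{n-\alpha}f^{(n)}(u)\,du-\frac{1}{(x-y)^{n-\alpha+2}}\int_{y}^{x}(u-y)^{n-\alpha+1}f^{(n)}(u)\,du .
\]
I would then recognize each of these integrals as a Gamma multiple of the Caputo right-sided derivative $(C_{D_{x^{-}}^{\alpha}}f)(u)$, exactly as in Lemma 1: a weight $(u-y)^{p}$ produces the factor $(-1)^{n}\Gamma(p+1)$, so that $\int_{y}^{x}(u-y)^{n-\alpha}f^{(n)}(u)\,du$ carries $(-1)^{n}\Gamma(n-\alpha+1)$ and $\int_{y}^{x}(u-y)^{n-\alpha+1}f^{(n)}(u)\,du$ carries $(-1)^{n}\Gamma(n-\alpha+2)$. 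Factoring out $\dfrac{(-1)^{n}}{(x-y)^{n-\alpha+1}}$ then collects the two contributions into the single bracketed expression on the left-hand side of $(\ref{a7})$.

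The only delicate point is the bookkeeping in this last identification: matching the correct Gamma arguments and powers of $(x-y)$ to the two weighted integrals. In particular my computation yields $\Gamma(n-\alpha+2)$ for the second term rather than the $\Gamma(n-\alpha+1)$ displayed in $(\ref{a7})$, so I would double-check the bracket in the statement against the calculation and correct it to read
\[
\frac{(-1)^{n}}{(x-y)^{n-\alpha+1}}\left[\Gamma(n-\alpha+1)-\frac{1}{x-y}\Gamma(n-\alpha+2)\right](C_{D_{x^{-}}^{\alpha}}f)(u).
\]
Everything else is routine and parallels the verification of the previous lemma.
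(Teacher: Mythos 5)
Your proof follows the paper's own argument step for step: the same expansion $t^{n-\alpha}(1-t)=t^{n-\alpha}-t^{n-\alpha+1}$, the same substitution $u=tx+(1-t)y$ converting the terms into $\frac{1}{(x-y)^{n-\alpha+1}}\int_{y}^{x}(u-y)^{n-\alpha}f^{(n)}(u)\,du-\frac{1}{(x-y)^{n-\alpha+2}}\int_{y}^{x}(u-y)^{n-\alpha+1}f^{(n)}(u)\,du$, and the same identification of these weighted integrals with Gamma multiples of the Caputo right-sided derivative. Your correction of the bracket to $\Gamma(n-\alpha+2)$ in the second term is right: the paper's own penultimate display carries exactly that factor, and the $\Gamma(n-\alpha+1)$ appearing in the stated equality (and copied into the final line of the paper's proof) is a transcription slip that your bookkeeping catches.
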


\begin{proof}
\begin{eqnarray*}
&&\int\nolimits_{0}^{1}t^{n-\alpha }\left( 1-t\right) f^{\left( n\right)
}\left( tx+\left( 1-t\right) y\right) dt \\
&=&\int\nolimits_{0}^{1}\left( t^{n-\alpha }-t^{n-\alpha +1}\right)
f^{\left( n\right) }\left( tx+\left( 1-t\right) y\right) dt\ \ \ \ ,\ \ \ \
\left( u=tx+\left( 1-t\right) y\right) \\
&=&\frac{1}{x-y}\int\nolimits_{y}^{x}\left( \left( \frac{u-y}{x-y}\right)
^{n-\alpha }-\left( \frac{u-y}{x-y}\right) ^{n-\alpha +1}\right) f^{\left(
n\right) }\left( u\right) du
\end{eqnarray*}%
\begin{eqnarray*}
&=&\frac{1}{\left( x-y\right) ^{n-\alpha +1}}\int\nolimits_{y}^{x}\left(
u-y\right) ^{n-\alpha }f^{\left( n\right) }\left( u\right) du \\
&&-\frac{1}{\left( x-y\right) ^{n-\alpha +2}}\int\nolimits_{y}^{x}\left(
u-y\right) ^{n-\alpha +1}f^{\left( n\right) }\left( u\right) du \\
&=&\frac{\left( -1\right) ^{n}}{\left( x-y\right) ^{n-\alpha +1}}\Gamma
\left( n-\alpha +1\right) \left( C_{D_{x^{-}}^{\alpha }}f\right) \left(
u\right) \\
&&-\frac{\left( -1\right) ^{n}}{\left( x-y\right) ^{n-\alpha +2}}\Gamma
\left( n-\alpha +2\right) \left( C_{D_{x^{-}}^{\alpha }}f\right) \left(
u\right) \\
&=&\frac{\left( -1\right) ^{n}}{\left( x-y\right) ^{n-\alpha +1}}\left[
\Gamma \left( n-\alpha +1\right) -\frac{1}{x-y}\Gamma \left( n-\alpha
+1\right) \right] \left( C_{D_{x^{-}}^{\alpha }}f\right) \left( u\right)
\end{eqnarray*}
\end{proof}

\begin{corollary}
Since$\ 0<n\leq 1,\ 0\leq \alpha <1,\ t\in \left[ 0,1\right] \ \ for\
n-\alpha \in \left( 0,1\right] \ $we have 
\begin{eqnarray*}
t &\leq &t^{n-\alpha }\ \Longrightarrow \ 1-t^{n-\alpha }\leq 1-t \\
&&\ \ \ t\left( 1-t^{n-\alpha }\right) f^{\left( n\right) }\left( tx+\left(
1-t\right) y\right) \\
&\leq &t^{n-\alpha }\left( 1-t\right) f^{\left( n\right) }\left( tx+\left(
1-t\right) y\right) \ ,\ \ \ \ \ \ \ \ 
\end{eqnarray*}
\end{corollary}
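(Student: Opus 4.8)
The plan is to reduce the entire statement to one elementary comparison, namely $t\le t^{\,n-\alpha}$ on $[0,1]$, and then to transport it through the two coefficient factors and the positive weight $f^{(n)}$. First I would fix $t\in[0,1]$ and note that, since the base satisfies $0\le t\le 1$ while the exponent satisfies $n-\alpha\le 1$, the power map $p\mapsto t^{p}$ is non-increasing in $p$; hence $t=t^{1}\le t^{\,n-\alpha}$. This is the only analytic input and it is genuinely elementary. From it I would immediately read off, by negation and addition of $1$, the second stated relation $1-t^{\,n-\alpha}\le 1-t$.

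Next I would compare the two coefficient functions that appear as integrands in Lemma~1 and Lemma~2. Expanding each product gives $t\bigl(1-t^{\,n-\alpha}\bigr)=t-t^{\,n-\alpha+1}$ and $t^{\,n-\alpha}\bigl(1-t\bigr)=t^{\,n-\alpha}-t^{\,n-\alpha+1}$, so that the cross terms cancel and the difference simplifies:
\begin{equation*}
t^{\,n-\alpha}\bigl(1-t\bigr)-t\bigl(1-t^{\,n-\alpha}\bigr)=t^{\,n-\alpha}-t\ge 0 ,
\end{equation*}
the non-negativity being exactly the comparison from the previous step. Hence $t\bigl(1-t^{\,n-\alpha}\bigr)\le t^{\,n-\alpha}\bigl(1-t\bigr)$ pointwise on $[0,1]$, with equality only at the endpoints $t=0$ and $t=1$.

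Finally I would invoke the standing positivity hypothesis $f^{(n)}>0$ carried over from Lemma~1. Since the evaluation $f^{(n)}\bigl(tx+(1-t)y\bigr)$ is positive, multiplying the last display by it preserves the inequality and produces the asserted pointwise bound
\begin{equation*}
t\bigl(1-t^{\,n-\alpha}\bigr)f^{(n)}\bigl(tx+(1-t)y\bigr)\le t^{\,n-\alpha}\bigl(1-t\bigr)f^{(n)}\bigl(tx+(1-t)y\bigr) .
\end{equation*}
If one then wishes to pass from this pointwise inequality to a statement about the two lemmas, I would integrate over $t\in[0,1]$; monotonicity of the integral yields $\int_{0}^{1}t(1-t^{\,n-\alpha})f^{(n)}\,dt\le\int_{0}^{1}t^{\,n-\alpha}(1-t)f^{(n)}\,dt$, and substituting the closed forms supplied by Lemma~1 and Lemma~2 converts this into the corresponding comparison between the two Caputo right-sided derivative expressions.

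The only point needing care is the direction of the power comparison: $t\le t^{\,n-\alpha}$ rests on $0\le t\le 1$ together with $n-\alpha\le 1$, and would reverse for a base exceeding $1$. The restriction $n-\alpha\in(0,1]$ additionally keeps both factors $1-t$ and $1-t^{\,n-\alpha}$ non-negative, so that the two integrands of the lemmas remain non-negative and the comparison is between genuinely positive quantities; beyond this there is no analytic obstacle, the corollary being pure monotonicity bookkeeping.
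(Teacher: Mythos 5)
Your proof is correct and takes essentially the same route as the paper: everything reduces to the single comparison $t\le t^{\,n-\alpha}$ for $t\in[0,1]$ and $n-\alpha\in(0,1]$, after which one multiplies by the positive weight $f^{(n)}\left(tx+(1-t)y\right)$ (positivity coming from the hypothesis $f^{(n)}>0$ of Lemma~1), and your cross-term cancellation $t^{\,n-\alpha}(1-t)-t\left(1-t^{\,n-\alpha}\right)=t^{\,n-\alpha}-t\ge 0$ is a harmless variant of the paper's step of multiplying the two ordered nonnegative inequalities $t\le t^{\,n-\alpha}$ and $1-t^{\,n-\alpha}\le 1-t$. One cosmetic slip: your parenthetical claim that equality holds only at $t=0$ and $t=1$ fails in the boundary case $n-\alpha=1$, where $t^{\,n-\alpha}=t$ identically, but this does not affect the stated inequality.
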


or 
\begin{eqnarray}
&&\int\nolimits_{0}^{1}t\left( 1-t^{n-\alpha }\right) f^{\left( n\right)
}\left( tx+\left( 1-t\right) y\right) dt  \label{a16} \\
&\leq &\int\nolimits_{0}^{1}t^{n-\alpha }\left( 1-t\right) f^{\left(
n\right) }\left( tx+\left( 1-t\right) y\right) dt
\end{eqnarray}

$\ $From $\left( \ref{a16}\right) ,$that is 
\begin{eqnarray*}
&&\frac{\left( -1\right) ^{n}}{\left( x-y\right) ^{2}}\left[ \Gamma \left(
2\right) -\frac{1}{\left( x-y\right) ^{n-\alpha }}\Gamma \left( n-\alpha
+2\right) \right] \\
&\leq &\frac{\left( -1\right) ^{n}}{\left( x-y\right) ^{n-\alpha +1}}\left[
\Gamma \left( n-\alpha +1\right) -\frac{1}{x-y}\Gamma \left( n-\alpha
+1\right) \right]
\end{eqnarray*}

or 
\begin{eqnarray*}
&&\frac{1}{\left( x-y\right) ^{2}}\left[ 1-\frac{1}{\left( x-y\right)
^{n-\alpha }}\Gamma \left( n-\alpha +2\right) \right] \\
&\leq &\frac{1}{\left( x-y\right) ^{n-\alpha +1}}\left[ 1-\frac{1}{x-y}%
\right] \Gamma \left( n-\alpha +1\right)
\end{eqnarray*}%
$\ $

\begin{theorem}
\bigskip $\ 0\leq \left[ \alpha \right] <1,\ t\in \left[ 0,1\right] ,$ $%
n=[\alpha ]+1,\;$\ If \ $f:I\subseteq \left[ 1,\infty \right) \rightarrow 
\mathbb{R}^{+},\ I\subset \lbrack 0,\infty )$ ,be a differentiable function
on I such that $f\in AC^{1}L[y,x]$ $\ $with $y<x.$If $\ \left\vert
f\right\vert \ $is $\left( n-\alpha \right) \ $is convex, The following
inequalities holds: 
\begin{eqnarray*}
&&\left\vert \left[ \frac{\left( -1\right) ^{n}}{\left( x-y\right) ^{2}}%
\Gamma \left( 2\right) -\frac{\left( -1\right) ^{n}\ }{\left( x-y\right)
^{\left( n-\alpha \right) }}\Gamma \left( n-\alpha +2\right) \right] \left(
C_{D_{x^{-}}}^{\alpha }f\right) \left( u\right) \right\vert \\
&\leq &\left\vert f\left( x\right) \right\vert \left( \frac{1}{n-\alpha +2}-%
\frac{1}{2\left( n-\alpha +1\right) }\right)
\end{eqnarray*}%
\begin{equation}
+\frac{\left\vert f\left( y\right) \right\vert }{4}\left[ \beta \left(
n-\alpha +1,2\right) -\frac{\beta \left( n-\alpha +1,n-\alpha +2\right) }{%
2^{n-\alpha +2}}\right]  \label{a9}
\end{equation}%
\begin{equation*}
+\frac{\left\vert f\left( y\right) \right\vert }{4}\left( -1\right)
^{n-\alpha +1}\left[ 1-\frac{1}{2^{n-\alpha +1}}\right] \beta \left(
n-\alpha +1,n-\alpha +2\right)
\end{equation*}
\end{theorem}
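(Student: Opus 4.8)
The plan is to read the quantity inside the absolute value on the left as \emph{exactly} the left-hand side of the first Lemma, and thereby replace the entire Caputo expression by the integral
\[
\int_0^1 t\left(1-t^{n-\alpha}\right) f^{(n)}\left(tx+(1-t)y\right)\,dt .
\]
First I would take absolute values on both sides of that lemma's identity and push the modulus inside the integral using the triangle inequality for integrals,
\[
\left|\int_0^1 t\left(1-t^{n-\alpha}\right) f^{(n)}(tx+(1-t)y)\,dt\right|\le \int_0^1 t\left(1-t^{n-\alpha}\right)\left|f^{(n)}(tx+(1-t)y)\right|\,dt .
\]
This step is legitimate precisely because the weight $t(1-t^{n-\alpha})$ is nonnegative on $[0,1]$ when $n-\alpha\in(0,1]$ (recall $[\alpha]=0$, so $n=1$), and thus may be left outside the modulus.

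Next I would invoke the hypothesis that $\left|f^{(n)}\right|$ is $\left(n-\alpha\right)$ convex and apply the defining inequality $(\ref{a1})$ to the integrand, obtaining
\[
\left|f^{(n)}(tx+(1-t)y)\right|\le t^{n-\alpha}\left|f(x)\right|+\left|t^{n-\alpha}-(1-t)^{n-\alpha}\right|\left|f(y)\right| .
\]
Substituting this bound splits the estimate into an $\left|f(x)\right|$-integral and an $\left|f(y)\right|$-integral. The first is elementary,
\[
\int_0^1 t\left(1-t^{n-\alpha}\right)t^{n-\alpha}\,dt=\int_0^1\left(t^{n-\alpha+1}-t^{2(n-\alpha)+1}\right)\,dt=\frac{1}{n-\alpha+2}-\frac{1}{2\left(n-\alpha+1\right)},
\]
which reproduces the first term of $(\ref{a9})$ verbatim.

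The remaining and genuinely delicate contribution is
\[
\left|f(y)\right|\int_0^1 t\left(1-t^{n-\alpha}\right)\left|t^{n-\alpha}-(1-t)^{n-\alpha}\right|\,dt .
\]
Here I would remove the modulus by splitting $[0,1]$ at $t=\tfrac12$: on $[0,\tfrac12]$ one has $(1-t)^{n-\alpha}\ge t^{n-\alpha}$ and on $[\tfrac12,1]$ the reverse, so the integrand reduces to a combination of products of powers of $t$ and $(1-t)$. Each resulting piece, such as $\int t(1-t)^{n-\alpha}\,dt$ and $\int t^{n-\alpha+1}(1-t)^{n-\alpha}\,dt$, I would express through the Beta function $\beta(p,q)=\int_0^1 t^{p-1}(1-t)^{q-1}\,dt$, the relevant values being $\beta(n-\alpha+1,2)$ and $\beta(n-\alpha+1,n-\alpha+2)$. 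The boundary point $t=\tfrac12$ generates the factors $2^{-(n-\alpha+2)}$ and $2^{-(n-\alpha+1)}$, while evaluating the half-range antiderivatives in the same formal manner as in the earlier theorem produces the sign factor $(-1)^{n-\alpha+1}$. Regrouping the four half-range contributions then yields exactly the two $\tfrac{\left|f(y)\right|}{4}$ blocks of $(\ref{a9})$.

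The main obstacle is this final bookkeeping: correctly evaluating the four incomplete Beta-type integrals over $[0,\tfrac12]$ and $[\tfrac12,1]$ and matching the resulting powers of $2$ and the $(-1)^{n-\alpha+1}$ factor to the stated right-hand side, since the split at $t=\tfrac12$ prevents a single clean Beta-function evaluation and forces one to track the two ranges separately before recombining.
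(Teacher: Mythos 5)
Your overall architecture matches the paper's: invoke the first lemma to replace the Caputo expression by $\int_0^1 t\left(1-t^{n-\alpha}\right)f^{(n)}\left(tx+(1-t)y\right)dt$, push the modulus inside (legitimate, as you note, since the weight is nonnegative for $n-\alpha\in(0,1]$), apply the convexity hypothesis, and compute the $\left\vert f(x)\right\vert$-moment, which you get exactly right. But there is a genuine gap in the final step, precisely at the point you yourself flagged as ``the main obstacle''. You keep the sharp factor $\left\vert t^{n-\alpha}-(1-t)^{n-\alpha}\right\vert$ from the definition $(\ref{a1})$ and claim that splitting at $t=\frac{1}{2}$ reduces everything to the complete Beta values $\beta(n-\alpha+1,2)$ and $\beta(n-\alpha+1,n-\alpha+2)$ together with the factors $2^{-(n-\alpha+2)}$, $2^{-(n-\alpha+1)}$ and $(-1)^{n-\alpha+1}$. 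It does not. With your bound, the half-range pieces are integrals such as $\int_0^{1/2}t(1-t)^{n-\alpha}dt$ and $\int_0^{1/2}t^{n-\alpha+1}(1-t)^{n-\alpha}dt$, i.e. incomplete Beta functions evaluated at $\frac{1}{2}$, which admit no closed form of the stated shape for non-integer $n-\alpha$; in particular no power of $2$ of that kind and no sign factor can emerge from them. The paper reaches the specific right-hand side of $(\ref{a9})$ only after the additional majorization $\left\vert t^{n-\alpha}-(1-t)^{n-\alpha}\right\vert\leq\left\vert 1-2t\right\vert^{n-\alpha}$ (recorded explicitly at the end of its argument, and already used in interpretation $a)$ of the definition), which turns the $\left\vert f(y)\right\vert$-integral into $\int_0^1\left(t-t^{n-\alpha+1}\right)\left\vert 1-2t\right\vert^{n-\alpha}dt$. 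The substitutions $s=1-2t$ on $\left[0,\frac{1}{2}\right]$ and $s=2t-1$ on $\left[\frac{1}{2},1\right]$ then give $t=\frac{1\mp s}{2}$, which is exactly where the factors $\frac{1}{4}$ and $2^{-(n-\alpha+2)}$ and the complete Beta integrals come from; the second half-range contributes $\int_0^1 s^{n-\alpha}(1+s)^{n-\alpha+1}ds$, which the paper (purely formally) rewrites as $(-1)^{n-\alpha+1}\beta(n-\alpha+1,n-\alpha+2)$. Without this weakening step your bookkeeping cannot reproduce $(\ref{a9})$; with it, your plan coincides with the paper's proof.

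A secondary remark: your displayed convexity step mixes $\left\vert f^{(n)}\right\vert$ on the left with $\left\vert f(x)\right\vert$ and $\left\vert f(y)\right\vert$ on the right, which no hypothesis supports: the theorem assumes $\left\vert f\right\vert$ is $(n-\alpha)$ convex, while the lemma's integrand involves $f^{(n)}$. The paper's own proof silently drops the superscript when passing from the lemma to the integral, so you have inherited an inconsistency of the source rather than created one; still, it should be flagged, since a correct statement needs either $\left\vert f^{(n)}\right\vert$ convex (yielding $\left\vert f^{(n)}(x)\right\vert$ and $\left\vert f^{(n)}(y)\right\vert$ on the right, as in the subsequent theorem) or a version of the lemma stated for $f$ itself.
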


\bigskip Here is $\beta \left( a,b\right) =\int\limits_{0}^{1}t^{x-1}\left(
1-t\right) ^{y-1}dt\ ,\ \left( x,y>0\right) \ \ $and$\ \ \Gamma \left(
r\right) =\int\limits_{0}^{\infty }e^{-t}t^{r-1}dt,\ \left( x>0\right) .$

Considering Lemma 1 and $\left( n-\alpha \right) \ $convexity of $\
\left\vert f\right\vert \ ,$ 
\begin{eqnarray*}
&&\left\vert \left[ \frac{\left( -1\right) ^{n}}{\left( x-y\right) ^{2}}%
\Gamma \left( 2\right) -\frac{\left( -1\right) ^{n}\ }{\left( x-y\right)
^{\left( n-\alpha \right) }}\Gamma \left( n-\alpha +2\right) \right] \left(
C_{D_{x^{-}}}^{\alpha }f\right) \left( u\right) \right\vert \\
&\leq &\left\vert \int\nolimits_{0}^{1}t\left( 1-t^{n-\alpha }\right)
f\left( tx+\left( 1-t\right) y\right) dt\right\vert
\end{eqnarray*}%
\begin{eqnarray*}
&&\left\vert \int\nolimits_{0}^{1}t\left( 1-t^{n-\alpha }\right) f\left(
tx+\left( 1-t\right) y\right) dt\right\vert \\
&\leq &\int\nolimits_{0}^{1}\left\vert t\left( 1-t^{n-\alpha }\right)
f\left( tx+\left( 1-t\right) y\right) \right\vert dt
\end{eqnarray*}%
\begin{eqnarray*}
&=&\int\nolimits_{0}^{1}t\left( 1-t^{n-\alpha }\right) \left\vert f\left(
tx+\left( 1-t\right) y\right) \right\vert dt \\
&\leq &\int\nolimits_{0}^{1}t\left( 1-t^{n-\alpha }\right) \left\{
t^{n-\alpha }\left\vert f\left( x\right) \right\vert +\left\vert
1-2t\right\vert ^{n-\alpha }\left\vert f\left( y\right) \right\vert \right\}
dt
\end{eqnarray*}%
\begin{equation*}
=\left\vert f\left( x\right) \right\vert \int\nolimits_{0}^{1}t^{n-\alpha
+1}\left( 1-t^{n-\alpha }\right) dt+\left\vert f\left( y\right) \right\vert
\int\nolimits_{0}^{1}t\left( 1-t^{n-\alpha }\right) \left\vert
1-2t\right\vert ^{n-\alpha }dt
\end{equation*}%
\begin{eqnarray*}
&&\left\vert f\left( x\right) \right\vert \int\nolimits_{0}^{1}t^{n-\alpha
+1}\left( 1-t^{n-\alpha }\right) dt+\left\vert f\left( y\right) \right\vert
\\
&&\times \left\{ \int\nolimits_{0}^{\frac{1}{2}}t\left( 1-t^{n-\alpha
}\right) \left( 1-2t\right) dt+\int\nolimits_{\frac{1}{2}}^{1}t\left(
1-t^{n-\alpha }\right) \left( 2t-1\right) dt\right\}
\end{eqnarray*}%
\begin{eqnarray*}
&=&\left\vert f\left( x\right) \right\vert \left( \frac{1}{n-\alpha +2}-%
\frac{1}{2\left( n-\alpha +1\right) }\right) \\
&&+\frac{\left\vert f\left( y\right) \right\vert }{4}\left[ \beta \left(
n-\alpha +1,2\right) -\frac{\beta \left( n-\alpha +1,n-\alpha +2\right) }{%
2^{n-\alpha +2}}\right]
\end{eqnarray*}%
\begin{equation*}
+\frac{\left\vert f\left( y\right) \right\vert }{4}\left( -1\right)
^{n-\alpha +1}\left[ 1-\frac{1}{2^{n-\alpha +1}}\right] \beta \left(
n-\alpha +1,n-\alpha +2\right)
\end{equation*}%
Here 
\begin{equation*}
\int\nolimits_{0}^{1}t^{n-\alpha +1}\left( 1-t^{n-\alpha }\right) dt=\frac{1%
}{n-\alpha +2}-\frac{1}{2\left( n-\alpha +1\right) }
\end{equation*}%
\begin{eqnarray*}
&&\int\nolimits_{0}^{1}\left( t-t^{n-\alpha +1}\right) \left\vert
1-2t\right\vert ^{n-\alpha }\ \left\vert f\right\vert \ (y)\ dt \\
&=&\int\nolimits_{0}^{\frac{1}{2}}\left( t-t^{n-\alpha +1}\right) \left(
1-2t\right) ^{n-\alpha }dt+\int\nolimits_{\frac{1}{2}}^{1}\left(
t-t^{n-\alpha +1}\right) \left( 2t-1\right) ^{n-\alpha }dt
\end{eqnarray*}%
\begin{equation*}
=\frac{1}{4}\left( \frac{1}{n-\alpha +2}-\frac{1}{n-\alpha +1}\right) -\frac{%
1}{2^{n-\alpha +2}}\beta \left( n-\alpha +1,n-\alpha +2\right)
\end{equation*}%
\begin{equation*}
+\frac{1}{4}\left( \frac{1}{n-\alpha +2}+\frac{1}{n-\alpha +1}\right)
-\left( -1\right) ^{n-\alpha +1}\beta \left( n-\alpha +1,n-\alpha +2\right)
\end{equation*}%
\begin{equation*}
\int\nolimits_{0}^{1}u^{n-\alpha }\left( u+1\right) ^{n-\alpha }du=\left(
-1\right) ^{n-\alpha +1}\beta \left( n-\alpha +1,n-\alpha +2\right) ,\ 
\end{equation*}

Here we used the fact that 
\begin{eqnarray*}
&&\ f(tx+(1-t)y) \\
&\leq &t^{n-\alpha }f(x)+\left\vert \left( t^{n-\alpha }-\left( 1-t\right)
^{n-\alpha }\right) \right\vert f(y)\  \\
&\leq &t^{n-\alpha }f(x)+\left\vert 2t-1\right\vert ^{n-\alpha }f(y).
\end{eqnarray*}

\begin{theorem}
Let the conditions of Theorem 3 be satisfied with $\left\vert
f^{n}\right\vert \ $is $\left( n-\alpha \right) \ $is convex. In this case,
the following inequality is true.
\end{theorem}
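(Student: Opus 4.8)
The plan is to run the argument of Theorem 3 again, this time exploiting the $\left( n-\alpha\right)$-convexity of $\left|f^{(n)}\right|$ in place of $|f|$; this is in fact the natural hypothesis here, since the integral identity of Lemma 1 carries $f^{(n)}$ (and not $f$) in its integrand. The expected statement is therefore the inequality $\left(\ref{a9}\right)$ with $|f(x)|,|f(y)|$ replaced throughout by $\left|f^{(n)}(x)\right|,\left|f^{(n)}(y)\right|$. First I would invoke Lemma 1 to rewrite the bracketed Caputo expression
$$\left[\frac{(-1)^n}{(x-y)^2}\Gamma(2)-\frac{(-1)^n}{(x-y)^{n-\alpha}}\Gamma(n-\alpha+2)\right]\left(C_{D_{x^-}}^\alpha f\right)(u)=\int\nolimits_0^1 t\left(1-t^{n-\alpha}\right)f^{(n)}\left(tx+(1-t)y\right)\,dt.$$
Taking absolute values and applying the triangle inequality for integrals moves the modulus inside the integral; because $0<n-\alpha\le 1$ forces $t^{n-\alpha}\le 1$ on $[0,1]$, the weight $t\left(1-t^{n-\alpha}\right)$ is nonnegative and may be pulled out of the modulus without any sign change.

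Next I would estimate $\left|f^{(n)}\left(tx+(1-t)y\right)\right|$ by the strengthened form of Definition 1 recorded in remark $a)$, namely
$$\left|f^{(n)}\left(tx+(1-t)y\right)\right|\le t^{n-\alpha}\left|f^{(n)}(x)\right|+\left|1-2t\right|^{n-\alpha}\left|f^{(n)}(y)\right|,$$
and then split the resulting bound into an $\left|f^{(n)}(x)\right|$-term and an $\left|f^{(n)}(y)\right|$-term. The first is the elementary integral $\int\nolimits_0^1 t^{n-\alpha+1}\left(1-t^{n-\alpha}\right)\,dt=\frac{1}{n-\alpha+2}-\frac{1}{2(n-\alpha+1)}$, which coincides exactly with the coefficient already obtained in Theorem 3.

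The delicate computation, and the step I expect to be the main obstacle, is the $\left|f^{(n)}(y)\right|$-coefficient $\int\nolimits_0^1 t\left(1-t^{n-\alpha}\right)\left|1-2t\right|^{n-\alpha}\,dt$. Here I would break the domain at $t=\tfrac12$, writing $\left|1-2t\right|^{n-\alpha}=(1-2t)^{n-\alpha}$ on $[0,\tfrac12]$ and $(2t-1)^{n-\alpha}$ on $[\tfrac12,1]$, expand $t\left(1-t^{n-\alpha}\right)=t-t^{n-\alpha+1}$, and reduce each of the four monomial integrals to Beta functions via the rescalings $s=2t$ and $s=2t-1$. The scaling accounts for the factors $2^{-(n-\alpha+2)}$ and $2^{-(n-\alpha+1)}$, while the shifted substitution produces the factor $(-1)^{n-\alpha+1}$ through the relation $\int\nolimits_0^1 u^{n-\alpha}(u+1)^{n-\alpha}\,du=(-1)^{n-\alpha+1}\beta(n-\alpha+1,n-\alpha+2)$ already used in Theorem 3. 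Collecting the three evaluated pieces then yields precisely the right-hand side of $\left(\ref{a9}\right)$ with $|f(x)|,|f(y)|$ replaced by $\left|f^{(n)}(x)\right|,\left|f^{(n)}(y)\right|$, which completes the proof.
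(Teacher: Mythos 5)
Your computation is internally reasonable, but it does not prove the inequality this theorem asserts. The ``following inequality'' displayed after the theorem statement in the paper is
\begin{eqnarray*}
&&\left[ \frac{\left( -1\right) ^{n}}{\left( x-y\right) ^{2}}\Gamma \left( 2\right) -\frac{\left( -1\right) ^{n}}{\left( x-y\right) ^{n-\alpha }}\Gamma \left( n-\alpha +2\right) \right] \left( C_{D_{x^{-}}}^{\alpha }f\right) \left( u\right) \\
&\leq &\left\vert f^{\left( n\right) }\left( y\right) \right\vert \left( \beta \left( n-\alpha +1,n-\alpha +2\right) +\beta \left( 2,2\left( n-\alpha \right) +1\right) \right) +\left\vert f^{\left( n\right) }\left( x\right) \right\vert \beta \left( 2,2\left( n-\alpha \right) +1\right) ,
\end{eqnarray*}
and these coefficients cannot arise from the Lemma 1 kernel $t\left( 1-t^{n-\alpha }\right) $ that you use. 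The paper's proof instead starts from Lemma 2, whose kernel is $t^{n-\alpha }\left( 1-t\right) $, and --- this is the second divergence --- after the triangle inequality it estimates the coefficient of $\left\vert f^{\left( n\right) }\left( y\right) \right\vert $ in the convexity definition not by $\left\vert 1-2t\right\vert ^{n-\alpha }$ (your choice, inherited from Theorem 3) but by the cruder bound $\left\vert t^{n-\alpha }-\left( 1-t\right) ^{n-\alpha }\right\vert \leq t^{n-\alpha }+\left( 1-t\right) ^{n-\alpha }$. With that choice the two resulting integrals are exact Beta integrals, namely $\int\nolimits_{0}^{1}t^{2\left( n-\alpha \right) }\left( 1-t\right) dt=\beta \left( 2\left( n-\alpha \right) +1,2\right) $ and $\int\nolimits_{0}^{1}t^{n-\alpha }\left( 1-t\right) ^{n-\alpha +1}dt=\beta \left( n-\alpha +1,n-\alpha +2\right) $, with no splitting at $t=\frac{1}{2}$, no rescalings, and none of the $\left( -1\right) ^{n-\alpha +1}$ artifacts that your route imports from Theorem 3.

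What your argument actually establishes, by rerunning Theorem 3's computation with $\left\vert f^{\left( n\right) }\right\vert $ in place of $\left\vert f\right\vert $, is a corrected form of Theorem 3 itself; it duplicates that earlier bound rather than producing the new one that is the content of this theorem. There is also a bookkeeping point your reconstruction leaves open: the stated left-hand side is indeed the Lemma 1 expression (as you assumed), while the paper's proof bounds the Lemma 2 expression; the bridge between them is the Corollary's kernel comparison $t\left( 1-t^{n-\alpha }\right) \leq t^{n-\alpha }\left( 1-t\right) $ for $n-\alpha \in \left( 0,1\right] $, which together with $f^{\left( n\right) }>0$ gives
\begin{equation*}
\int\nolimits_{0}^{1}t\left( 1-t^{n-\alpha }\right) f^{\left( n\right) }\left( tx+\left( 1-t\right) y\right) dt\leq \int\nolimits_{0}^{1}t^{n-\alpha }\left( 1-t\right) f^{\left( n\right) }\left( tx+\left( 1-t\right) y\right) dt,
\end{equation*}
so the Lemma 1 quantity is dominated by the Lemma 2 quantity, which is then bounded by the Beta expression above. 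Without passing to the Lemma 2 kernel in this way, your approach cannot reach the right-hand side the theorem claims.
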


\begin{equation*}
\left[ \frac{\left( -1\right) ^{n}}{\left( x-y\right) ^{2}}\Gamma \left(
2\right) -\frac{\left( -1\right) ^{n}\ }{\left( x-y\right) ^{\left( n-\alpha
\right) }}\Gamma \left( n-\alpha +2\right) \right] \left(
C_{D_{x^{-}}}^{\alpha }f\right) \left( u\right)
\end{equation*}%
\begin{eqnarray*}
&\leq &\left\vert f^{\left( n\right) }\left( y\right) \right\vert \left(
\beta \left( \left( n-\alpha \right) +1,\left( n-\alpha \right) +2\right)
+\beta \left( 2,2\left( n-\alpha \right) +1\right) \right) \\
&&+\left\vert f^{\left( n\right) }\left( x\right) \right\vert \beta \left(
2,2\left( n-\alpha \right) +1\right)
\end{eqnarray*}

\begin{proof}
From Lemma 2 and the the properties of Modulus%
\begin{eqnarray*}
&&\left\vert \frac{\left( -1\right) ^{n}}{\left( x-y\right) ^{n-\alpha +1}}%
\left[ \Gamma \left( n-\alpha +1\right) -\frac{1}{x-y}\Gamma \left( n-\alpha
+1\right) \right] \left( C_{D_{x^{-}}^{\alpha }}f\right) \left( u\right)
\right\vert \\
&=&\left\vert \int\nolimits_{0}^{1}t^{n-\alpha }\left( 1-t\right) f^{\left(
n\right) }\left( tx+\left( 1-t\right) y\right) dt\right\vert
\end{eqnarray*}%
\begin{equation*}
\leq \int\nolimits_{0}^{1}t^{n-\alpha }\left( 1-t\right) \left\vert
f^{\left( n\right) }\left( tx+\left( 1-t\right) y\right) \right\vert dt
\end{equation*}%
\begin{eqnarray*}
&\leq &\left\vert f^{\left( n\right) }\left( x\right) \right\vert
\int\nolimits_{0}^{1}t^{2\left( n-\alpha \right) }\left( 1-t\right) dt \\
&&+\left\vert f^{\left( n\right) }\left( y\right) \right\vert
\int\nolimits_{0}^{1}t^{n-\alpha }\left( 1-t\right) \left\{ t^{n-\alpha
}+\left( 1-t\right) ^{n-\alpha }\right\} dt
\end{eqnarray*}%
\begin{eqnarray*}
&=&\left\vert f^{\left( n\right) }\left( x\right) \right\vert \beta \left(
2,2\left( n-\alpha \right) +1\right) \\
&&+\left\vert f^{\left( n\right) }\left( y\right) \right\vert \left\{
\int\nolimits_{0}^{1}t^{2\left( n-\alpha \right) }\left( 1-t\right)
dt+\int\nolimits_{0}^{1}t^{\left( n-\alpha \right) }\left( 1-t\right)
^{\left( n-\alpha +1\right) }dt\right\}
\end{eqnarray*}%
\begin{eqnarray*}
&=&\left\vert f^{\left( n\right) }\left( x\right) \right\vert \beta \left(
2,2\left( n-\alpha \right) +1\right) \\
&&+\left\vert f^{\left( n\right) }\left( y\right) \right\vert \left( \beta
\left( 2,2\left( n-\alpha \right) +1\right) +\beta \left( \left( n-\alpha
\right) +1,\left( n-\alpha \right) +2\right) \right)
\end{eqnarray*}
\end{proof}

\begin{theorem}
\ \ \ \ $\ \ $For all $n-\alpha \in \left( -1,1\right] \ \ $we obtain a
general elementary inequality%
\begin{equation*}
\frac{1}{2}\leq \frac{n-\alpha +3}{\left( n-\alpha +1\right) \left( n-\alpha
+2\right) }
\end{equation*}
\end{theorem}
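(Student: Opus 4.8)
The plan is to substitute $s = n-\alpha$ and reduce the claimed inequality to a single quadratic inequality in $s$ that can be checked on the whole interval $(-1,1]$. First I would record the one structural fact that needs care: since $s = n-\alpha \in (-1,1]$, both factors in the denominator are strictly positive, namely $s+1>0$ (because $s>-1$) and $s+2>0$, so $(s+1)(s+2)>0$. This positivity is precisely what licenses clearing denominators without reversing the inequality; everything that follows is then routine algebra.

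Multiplying the target inequality $\tfrac{1}{2}\leq \tfrac{s+3}{(s+1)(s+2)}$ through by the positive quantity $2(s+1)(s+2)$, I would reduce it to the equivalent statement $(s+1)(s+2)\leq 2(s+3)$. Expanding both sides gives $s^{2}+3s+2 \leq 2s+6$, that is,
\[
g(s) := s^{2}+s-4 \leq 0.
\]
Thus the entire claim collapses to verifying that the quadratic $g$ is nonpositive for every $s\in(-1,1]$.

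Finally I would check $g(s)\leq 0$ on the interval. Since $g$ is convex (its leading coefficient is positive), its maximum over the closed interval $[-1,1]$ is attained at an endpoint, and both endpoint values $g(-1)=-4$ and $g(1)=-2$ are negative; hence $g(s)<0$ throughout $[-1,1]$ and a fortiori on $(-1,1]$. Equivalently, the roots of $g$ are $\tfrac{-1\pm\sqrt{17}}{2}\approx -2.56$ and $1.56$, and the interval $(-1,1]$ lies strictly between them, where the convex parabola is negative. This establishes the inequality. I do not expect any genuine obstacle here: the only subtlety is the positivity of the denominator, which is immediate from $s>-1$, and the remaining quadratic estimate is elementary.
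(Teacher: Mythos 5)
Your proposal is correct, but it takes a genuinely different route from the paper. You substitute $s=n-\alpha$, clear the positive denominator $(s+1)(s+2)$, and reduce the claim to the quadratic inequality $s^{2}+s-4\leq 0$ on $(-1,1]$, verified by convexity and the endpoint values $g(-1)=-4$, $g(1)=-2$ (equivalently via the roots $\tfrac{-1\pm\sqrt{17}}{2}$); this is self-contained, purely algebraic, and in fact shows the inequality holds strictly on the larger interval between the roots. The paper instead obtains the inequality by integration: from $t\leq t^{n-\alpha}$ for $t\in[0,1]$ and $n-\alpha\in(-1,1]$ it derives the pointwise bound $t\left(1-t^{n-\alpha}\right)\leq t^{n-\alpha}\left(1-t\right)$ and integrates over $[0,1]$, so the theorem appears there as the scalar shadow of the same kernel comparison that underlies its corollary linking the two Caputo-derivative lemmas, which is the conceptual payoff of that route. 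Worth noting: the paper's evaluation of the left-hand integral contains a slip, since $\int_{0}^{1}t^{n-\alpha+1}\,dt=\tfrac{1}{n-\alpha+2}$ rather than $\tfrac{1}{n-\alpha+1}$; carried out correctly, the integral comparison $\tfrac{1}{2}-\tfrac{1}{s+2}\leq\tfrac{1}{s+1}-\tfrac{1}{s+2}$ actually yields the stronger bound $\tfrac{1}{2}\leq\tfrac{1}{s+1}$, which implies the stated inequality because $\tfrac{s+3}{s+2}\geq 1$. Your direct argument sidesteps this pitfall entirely, at the modest cost of not exhibiting the inequality's origin in the paper's fractional-derivative framework.
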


Here $0<n\leq 1,\ 0\leq \alpha <1$

\begin{proof}
Since $t\leq t^{n-\alpha }\ \Longrightarrow \ 1-t^{n-\alpha }\leq 1-t\ $we
can write inequality, for $n-\alpha \in \left( -1,1\right] \ $ve\ $t\in %
\left[ 0,1\right] $.%
\begin{equation*}
t\left( 1-t^{n-\alpha }\right) \leq t^{n-\alpha }\left( 1-t\right) .
\end{equation*}%
Now, if we only integrate both sides of the inequality $t\left(
1-t^{n-\alpha }\right) \leq t^{n-\alpha }\left( 1-t\right) $ over $\left[ 0,1%
\right] $, \ We obtain the desired inequality: 
\begin{eqnarray*}
\int\nolimits_{0}^{1}t\left( 1-t^{n-\alpha }\right) dt &\leq
&\int\nolimits_{0}^{1}t^{n-\alpha }\left( 1-t\right) dt \\
\frac{t^{2}}{2}-\frac{t^{n-\alpha +1}}{n-\alpha +1}_{0}^{1} &\leq &\frac{%
t^{n-\alpha +1}}{^{n-\alpha +1}}-\frac{t^{n-\alpha +2}}{n-\alpha +2}_{0}^{1}
\\
\frac{1}{2} &\leq &\frac{n-\alpha +3}{\left( n-\alpha +1\right) \left(
n-\alpha +2\right) }
\end{eqnarray*}
\end{proof}

\section{Conclusion}

In fact, higher mathematics does not go beyond the study of numbers or
number systems in one way or another (e.g. derivative or integral, etc.).
One of the three basic properties of real numbers is the order relation is
especially important in optimization theory and nonlinear programming. In
this context, the convex functions discussed play a leading role. For
example, the H-H integral inequality \ \ \ and its variants, well known in
the literature, have been obtained through convex functions and classical
inequalities. The results obtained above in such that $\left( n-\alpha
\right) $ -convex and $f\in AC^{1}L[y,x]$ $\ $with $y<x$ are just a few of
the infinite number of upper bounds. Researchers interested in the subject
can approach the smallest of the upper limits by writing more specific
parameters.

\end{document}